\newtheorem{defn}{Definition}[section]
\newtheorem{lemma}[defn]{Lemma}
\newtheorem{theorem}{Theorem}
\newtheorem{prop}[defn]{Proposition}
\newcommand{\aand}{\textrm{ and }}
\newcommand{\bdm}{\begin{displaymath}}
\newcommand{\edm}{\end{displaymath}}
\newcommand{\beqn}{\begin{equation}}
\newcommand{\eeqn}{\end{equation}}
\newcommand{\bbb}{\textrm{   }}
\newcommand{\bb}{\textrm{  }}
\newcommand{\bbbbb}{\bbb \bbb \bbb \bbb \bbb \bbb \bbb \bbb \bbb \bbb}
\newcommand{\BBb}{\bbbbb \bbbbb}
\newcommand{\Rm}{\mathbf{R}}
\newcommand{\Rmo}{\mathbf{\mathcal{R}}}
\newcommand{\bqr}{\begin{eqnarray*}} 
\newcommand{\eqr}{\end{eqnarray*}}
\newcommand{\Ra}{\mathbf{R_{1}}}  
\newcommand{\Rbb}{\mathbf{R_{2}}}  
\newcommand{\T}{\mathbf{\mathcal{T}}}
\newcommand{\pp}{\mathbf{P}} 
\newcommand{\Qm}{\mathbf{Q}} 
\newcommand{\pa}{\mathbf{P_{1}}} 
\newcommand{\pb}{\mathbf{P_{2}}} 
\newcommand{\ppo}{\mathbf{\mathcal{P}}} 
\newcommand{\Qmo}{\mathbf{\mathcal{Q}}} 
\newcommand{\Qtil}{\mathbf{\tilde{\mathcal{Q}}}} 
\newcommand{\Qatil}{\mathbf{\tilde{\mathcal{Q}}_{1}}} 
\newcommand{\Qbtil}{\mathbf{\tilde{\mathcal{Q}}_{2}}}
\newcommand{\QQm}{\mathbf{Q^{'}}} 
\newcommand{\pao}{\mathbf{\mathcal{P}_{1}}} 
\newcommand{\pbo}{\mathbf{\mathcal{P}_{2}}}
\newcommand{\Qpo}{\mathbf{\overline{\mathcal{Q}}}}
\newcommand{\QQpo}{\mathbf{\overline{\mathcal{QQ}}}}
\newcommand{\Ols}{\mathbf{P_{loops}}}
\newcommand{\Oloops}{\mathbf{\mathcal{P}_{loops}}}
\newcommand{\DOgQtilploops}{\mathbf{D_{orbit_{g}}}( \mathbf{\tilde{\mathcal{Q}}},\mathbf{\mathcal{P}_{loops}})}
\newcommand{\DOgQatilploops}{\mathbf{D_{orbit_{g}}}( \mathbf{\tilde{\mathcal{Q}}_{1}},\mathbf{\mathcal{P}_{loops}})}
\newcommand{\DOgQbtilploops}{\mathbf{D_{orbit_{g}}}( \mathbf{\tilde{\mathcal{Q}}_{2}},\mathbf{\mathcal{P}_{loops}})}
\newcommand{\DOgPQtil}{\mathbf{D_{orbit_{g}}}( \mathbf{\mathcal{P}}, \mathbf{\tilde{\mathcal{Q}}})}
\newcommand{\DOgPaQatil}{\mathbf{D_{orbit_{g}}}( \mathbf{\mathcal{P}_{1}}, \mathbf{\tilde{\mathcal{Q}}_{1}})}
\newcommand{\DOgPbQbtil}{\mathbf{D_{orbit_{g}}}( \mathbf{\mathcal{P}_{2}}, \mathbf{\tilde{\mathcal{Q}}_{2}})}
\newcommand{\DgPaPb}{\mathbf{D_{g}}( \mathbf{P_{1}},\mathbf{P_{2}})}
\newcommand{\DgPaRa}{\mathbf{D_{g}}( \mathbf{P_{1}},\mathbf{R_{1}})}
\newcommand{\DgPbRbb}{\mathbf{D_{g}}( \mathbf{P_{2}},\mathbf{R_{2}})}
\newcommand{\DgRaRbb}{\mathbf{D_{g}}( \mathbf{R_{1}},\mathbf{R_{2}})}
\newcommand{\DOgPaPb}{\mathbf{D_{orbit_{g}}}( \mathbf{\mathcal{P}_{1}},\mathbf{\mathcal{P}_{2}})}
\newcommand{\DOgsigma}{\mathbf{D}( \mathcal{O}(\Sigma_{g}))}
\newcommand{\Sg}{\Sigma_{g}}
\newcommand{\Modgsigma}{\mathbf{Mod}(\Sigma_{g})}
\newcommand{\PSg}{\mathcal{P}(\Sigma_{g})}
\newcommand{\OSg}{\mathcal{O}(\Sigma_{g})}
\newcommand{\OSf}{\mathcal{O}(\Sigma_{g-1})}
\newcommand{\kGcage}{(k,G)\textrm{--cage}}
\newcommand{\cGcage}{(3,G)\textrm{--cage}}
\begin{document}
\title{Upper Bound on Distance in the Pants Complex}
\author{Harriet Handel Moser}
\maketitle
\pagenumbering{arabic}

\section{Introduction} \label{intro}
In 1980 Hatcher and Thurston~\cite{HT} defined the pants graph, and showed that it is path connected.  In this graph vertices are pants decompositions and an edge joins 2 vertices if there is an elementary move between the 2 corresponding pants decompositions.  An elementary move occurs when one pants decomposition is transformed into another by replacing one curve with another one so that the two curves have minimal intersection(see Sec~\ref{pg}).  Distance comes from the path metric whereby the distance between 2 vertices joined by an edge is 1.  Approximately 20 years later, Hatcher, Lochak and Schneps~\cite{HLS} defined the pants complex by adding $2 $--cells to the 1--dimensional pants graph with distance the same as in the 1--skeleton. 

The purpose of this paper is to establish an upper bound on the distance between $2$ distinct pants decompositions in the pants complex.  Graph theory is used to accomplish this.  As part of the process, an upper bound on distance in the pants complex, modulo the action of the mapping class group, is found.  
A pants decomposition, $\pp$, will have a pants decomposition graph where the vertices are the pairs of pants resulting from cutting the surface along the curves of $\pp$, and the edges are the curves of $\pp$.  For a genus $g$ closed surface this is a trivalent graph on $2g - 2$ vertices with $3g - 3$ edges.  An elementary move from $\pp$ to another pants decomposition, $\Qm$, induces a transformation from the pants decomposition graph for $\pp$ to the pants decomposition graph for $\Qm$, called an elementary shift.  It can be shown that these $2$ pants decomposition graphs are isomorphic if and only if the action of the mapping class group on the surface takes $\pp$ to $\Qm$~\cite{P,UW}.  Thus each orbit of this action will have a unique trivalent graph associated to it.  The result is we can define a new graph, which we call the \emph{orbit graph}, that is the pants graph modulo the mapping class group.  See Figure~\ref{og}.  In this graph, vertices are orbits of the action of the mapping class group on the pants graph and an edge corresponds to an elementary shift from the trivalent graph of one orbit to that of another orbit.  The importance of the orbit graph is that a path in this graph from the orbit containing $\pp$ to the orbit containing $\Qm$ corresponds to a path of the same length from $\pp$ to $\Qm$ in the pants graph.  Section~\ref{pg}, ``Graphs", is devoted to a detailed discussion of several of the above mentioned graphs associated with pants decompositions.  The concept of an orbit graph is new to this paper.  In Section~\ref{distorb}, ``Distance in the Orbit Graph", principles of graph theory are applied to the orbits to find paths between orbits and their length.  From now on, $\Sg$ represents a closed surface of genus $g \geq 2$.  Our first result, Theorem~\ref{orbdiam}, gives an upper bound on the diameter of the orbit graph.   
\begin{theorem} \label{orbdiam}
Let $\DOgsigma$ denote the diameter of the orbit graph for $\Sg$.  Then
\begin{equation} \nonumber
   \DOgsigma  \leq \left\{\begin{array}{ll} 
                  4\log{(g-1)!} + 4g - 6  & \mbox{$2 \leq g \leq 5$}\\
                  4\log{(g-1)!} + 6g - 16 & \mbox{$6 \leq g$}
                 \end{array}
                \right.
\end{equation}
\end{theorem}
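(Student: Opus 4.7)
I would bound the diameter of the orbit graph via the triangle inequality against a single canonical orbit. Choose $\Oloops$ to be the ``loop'' orbit, whose pants decomposition graph is the bouquet-like trivalent graph consisting of $g$ loops attached to a spine. Then for any two orbits $\mathcal{P}, \mathcal{Q}$ one has
\[
\mathbf{D_{orbit_{g}}}(\mathcal{P},\mathcal{Q}) \;\leq\; \mathbf{D_{orbit_{g}}}(\mathcal{P}, \Oloops) + \mathbf{D_{orbit_{g}}}(\Oloops, \mathcal{Q}),
\]
so it suffices to prove $\mathbf{D_{orbit_{g}}}(\mathcal{P}, \Oloops) \leq 2\log(g-1)! + (2g-3)$ in the range $2 \leq g \leq 5$, and the analogous bound with $3g-8$ in place of $2g-3$ for $g \geq 6$; doubling yields the stated diameter estimate for every pair $(\mathcal{P},\mathcal{Q})$.

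To bound $\mathbf{D_{orbit_{g}}}(\mathcal{P}, \Oloops)$ I would argue by induction on $g$, with the inductive step powered by a cage-type estimate: on a trivalent graph with $2g-2$ vertices, the Moore bound forces the girth to be at most $\log_{2}(g-1)+O(1)$, so a short cycle $C$ always exists in the pants decomposition graph of $\mathcal{P}$. I would then show combinatorially that such a short cycle can be ``collapsed'' to a single non-separating loop by a number of elementary shifts proportional to the length of $C$; the resulting pants decomposition contains a distinguished loop glued to a trivalent graph of genus $g-1$, which is then reduced to the canonical form recursively. Summing the cycle-length costs over the genera $g, g-1, \ldots, 2$ produces $\log(g-1)+\log(g-2)+\cdots+\log 1 = \log(g-1)!$, which is the logarithmic part of the bound; the additive linear-in-$g$ term accounts for the constant per-handle cost of attaching each peeled loop to the canonical form and for the base cases of the induction.

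The main obstacle is this peeling step: one must show that the abstract cage-theoretic short cycle in the pants decomposition graph can be realized as a sequence of elementary shifts whose total count is bounded by a small constant times the cycle length, without perturbing the rest of the decomposition graph in an uncontrolled way. This is fundamentally a graph-theoretic argument on trivalent graphs combined with a local analysis of how an elementary shift acts on the pair of pants flanking an edge of $C$. A secondary difficulty is the piecewise nature of the bound: the Moore estimate is too crude for small $g$, so the genera $2 \leq g \leq 5$ must be dispatched by a direct enumeration of trivalent graphs and the reductions among them, producing the tighter additive constant $4g-6$ rather than the asymptotic $6g-16$ that the cage argument yields for $g \geq 6$.
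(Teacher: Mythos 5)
Your plan coincides with the paper's proof: the paper bounds the distance from an arbitrary orbit to the $g$--loop orbit $\Oloops$ by $2g-3+2\log{(g-1)!}$ plus an extra $\max{(0,g-5)}$, using precisely the Moore/cage girth bound for trivalent graphs on $2g-2$ vertices, collapse of a girth-length cycle to a loop by elementary shifts, loop surgery to drop to genus $g-1$, and induction on genus, and then doubles via the triangle inequality through $\Oloops$. The one inaccuracy in your sketch is the source of the piecewise additive constant: it does not come from the cage estimate being too crude for small $g$ (the girth argument is used uniformly), but from the cost of rearranging the $g$ peeled loops, which may end up distributed over several branches, into the canonical configuration $\Oloops$ --- for $2 \leq g \leq 5$ every trivalent graph on $2g-2$ vertices with $g$ loops is already isomorphic to $\Oloops$, so that cost is $0$, while for $g \geq 6$ it contributes the extra $g-5$ per endpoint that turns $4g-6$ into $6g-16$.
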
 
There is a special pants decomposition, $\Ols$.  This is seen, along with its pants decomposition graph, $\Oloops$, in Figure~\ref{loops}.  Section~\ref{basedist}, ``Distance Within the Orbit $\Oloops$", finds a path between any $2$ pants decompositions that are in $\Oloops$ and then calculates the length of the path.  It makes use of a set of curves, $\{c_{1}, \ldots, c_{3g-1}\}$, shown in Figure~\ref{lick}, and twists about them, $\{T_{c_{1}}, \ldots,T_{c_{3g-1}} \}$, which are a set of Lickorish generators for $\Modgsigma$.  We call these curves the \emph{standard Lickorish generating curves} and we refer to the twists as the \emph{standard Lickorish generators}.  We will denote two arbitrary pants decompositions on $\Sg$ by $\pa \aand \pb$ and the distance between them in the pants complex as $\DgPaPb$.  Our second result is Theorem~\ref{goal}, which finds an upper bound on distance in the pants complex.  
\begin{theorem} \label{goal}
There is a pants decomposition, $\Ols$, and two other pants decompositions, $\Ra$ and $\Rbb$, in the same orbit as $\Ols$ where $\Ra$(resp. $\Rbb$) is the end of a path from $\pa$(resp. $\pb$).  Let $w,h \in \Modgsigma$  such that $h(\Ra)= \Ols$ and $w(\Ra)=\Rbb$. Then there are some standard Lickorish generating curves $\{c_{i_{1}}, \ldots, c_{i_{k}} \}$ such that $w=T^{e_{i_{1}}}_{h^{-1}(c_{i_{1}})} \cdots T^{e_{i_{k}}}_{h^{-1}(c_{i_{k}})}$ with $e_{i_{j}}=\pm 1 $ for $1 \leq j \leq k$ where this is a minimum length word in these twists and
\begin{equation} \nonumber
   \DgPaPb  \leq \left\{\begin{array}{ll} 
                 4\log{(g-1)!} + 4g - 6 +\sum_{1 \leq i_{j} \leq g}|e_{i_{j}}| + \\
\bbb \bbb \bbb  4\sum_ {i_{j}=g+1,\, 2g-1}|e_{i_{j}}| + 6\sum_ {g+2 \leq i_{j} \leq 2g-2}|e_{i_{j}}| & \mbox{$2 \leq g \leq 5$}\\
               4\log{(g-1)!} + 6g - 16 +\sum_{1 \leq i_{j} \leq g}|e_{i_{j}}| + \\
\bbb \bbb \bbb  4\sum_ {i_{j}=g+1,\, 2g-1}|e_{i_{j}}| + 6\sum_ {g+2 \leq i_{j} \leq 2g-2}|e_{i_{j}}| & \mbox{$6 \leq g$}
                 \end{array}
                \right.
\end{equation}
\end{theorem}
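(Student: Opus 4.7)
The plan is to build a path in the pants complex from $\pa$ to $\pb$ in three stages: $\pa \to \Ra$, $\Ra \to \Rbb$, and $\Rbb \to \pb$, with $\Ra$ and $\Rbb$ lying in $\Oloops$, the orbit of $\Ols$. Bounding the length of each stage separately and summing will yield the inequality of the theorem.

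For the two outer stages I would appeal to Theorem~\ref{orbdiam} and, more importantly, to the construction carried out in Section~\ref{distorb}. That construction bounds $\DOgsigma$ by producing, for every orbit, an explicit path to the orbit $[\Ols]$ in the orbit graph, so the diameter bound naturally splits as the length of a concatenated path $[\pa]\to[\Ols]\to[\pb]$. Lifting this concatenated path to the pants complex gives exactly the outer segments $\pa\to\Ra$ and $\Rbb\to\pb$, with $\Ra,\Rbb\in\Oloops$, and the combined length is at most $4\log(g-1)!+4g-6$ for $2\leq g\leq 5$ and $4\log(g-1)!+6g-16$ for $g\geq 6$.

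The middle stage requires connecting $\Ra$ to $\Rbb$ inside $\Oloops$. Since these two decompositions share an orbit, there is $w\in\Modgsigma$ with $w(\Ra)=\Rbb$; because the standard Lickorish twists generate $\Modgsigma$, so do the twists $T_{h^{-1}(c_i)}$ obtained by conjugating by the $h^{-1}$ of the theorem statement. Writing $w$ as a minimum-length word in these conjugated generators gives the presentation $w = T^{e_{i_1}}_{h^{-1}(c_{i_1})}\cdots T^{e_{i_k}}_{h^{-1}(c_{i_k})}$ asserted. The conjugation by $h^{-1}$ is precisely what makes the geometry uniform: the curves $h^{-1}(c_i)$ sit with respect to $\Ra$ exactly as the $c_i$ sit with respect to $\Ols$, so the pants-complex cost of realizing a single factor depends only on which standard Lickorish curve $c_i$ is being twisted, not on $\Ra$ itself.

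The main technical step, and the content of Section~\ref{basedist}, is to realize each factor as a short explicit path inside $\Oloops$. For a loop curve with $1\leq i\leq g$ the curve $h^{-1}(c_i)$ is already one of the curves of the ambient pants decomposition, so a single elementary move suffices per $\pm 1$ exponent. For an end curve ($i=g+1$ or $i=2g-1$) four elementary moves suffice, and for a middle curve ($g+2\leq i\leq 2g-2$) six. The hard part will be producing these short realizations for the end and middle twists, since those Lickorish curves intersect the curves of $\Ols$ essentially and no single elementary move can perform the twist; a careful bookkeeping of intermediate pants decompositions is required. Once the per-generator bounds are in place, concatenating the realizing paths for all $k$ factors of $w$ contributes exactly the twist-sum appearing in the statement, and adding in the outer-segment bound completes the proof.
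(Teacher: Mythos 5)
Your proposal matches the paper's proof: the outer segments are bounded by the orbit-graph construction summarized in Equation~\ref{diamglp}, and the middle segment by writing $w$ as a minimal word in the conjugated generators $T_{h^{-1}(c_{i})}$ and realizing each factor by a path of length $l_{c_{i}}$ inside the orbit of $\Ols$, which is exactly Lemmas~\ref{lem1} and~\ref{lem2} together with Proposition~\ref{prop}. One small slip in your description of the per-generator costs: for $1 \leq i \leq g$ the curve $c_{i}$ is not itself a curve of $\Ols$ (the curves whose twists fix $\Ols$, namely $c_{2g},\ldots,c_{3g-1}$, are the ones with cost $0$); rather $c_{i}$ meets one curve of $\Ols$ exactly once inside a one-holed torus, which is why a single elementary move realizes each such twist.
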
 
We conclude with Section~\ref{op}, ``Open Problems and Limitations".
\section{Graphs}  \label{pg}
We begin with a few definitions.  They can be found in the work of Margalit~\cite{M} and Putman~\cite{P}, as well as many others.  A \emph{pants decomposition} is a maximal collection of pairwise disjoint isotopy classes of simple closed curves that are essential and non-peripheral.  From now on, unless specified otherwise, the term curve will refer to the isotopy class of a curve.  So for the surface $\Sg$, a pants decomposition, $\pp$, consists of $3g - 3$ curves that cut the surface into $2g-2$ pairs of pants and is sometimes written as $\{c_{1},\ldots,c_{3g-3}\}$.  If we consider $\{c_{1},\ldots,c_{i-1},c_{i+1},\ldots,c_{3g-3}\}$, these curves cut the surface so that all pieces are the same as before except for the pairs of pants containing $c_{i}$.  This exception can be either a one--holed torus, $\T$, where $c_{i} \subset \T$, or a $4$--holed sphere where 2 pairs of pants have $c_{i}$ as a common curve separating them.  An \emph{elementary move} is a situation where the curve $c_{i}$ is replaced by another curve $c^{'}_{i}$ so that for the one--holed torus, $i(c_{i},c^{'}_{i})=1$, and for the $4$--holed sphere, $i(c_{i},c^{'}_{i})=2$.  The result is another pants decomposition $\{c_{1},\ldots,c_{i-1},c^{'}_{i},c_{i+1},\ldots,c_{3g-3}\}$.  This can be seen in Figure~\ref{em}.
\begin{figure}

\begin{center}

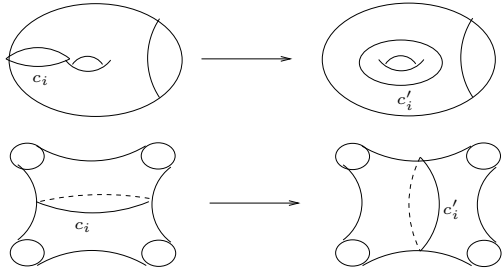

\caption{Elementary Moves} \label{em}
\end{center}
\end{figure}

A \emph{pants graph}, $\PSg$, for $\Sg$, is a $1$--dimensional complex where vertices are pants decompositions and $2$ vertices are joined by an edge when there is an elementary move between the $2$ pants decompositions associated to the vertices.  A \emph{path} in the pants graph is from vertex to vertex along these edges.  The \emph{length}  of a path is the number of edges traversed along the path and the \emph{distance} between $2$ vertices is the shortest path between them.  Let $\pp=\{c_{1},\ldots,c_{3g-3}\}$ be a pants decomposition for $\Sg$.  Then $\phi( \pp)$, the \emph{pants decomposition graph} for $\pp$, is a graph with $2g-2$ vertices, one for each pair of pants obtained by cutting the surface along the curves of the decomposition, and $3g-3$ edges, one for each curve in $\pp$.  A curve belonging to $2$ pairs of pants will correspond to an edge between the vertices for the pairs of pants, and a curve that is entirely in a pair of pants will correspond to a loop type edge at the vertex for that pair of pants.  Figure~\ref{base} demonstrates this.  The valence of every vertex will be $3$.  This is true even when a loop is attached at a vertex since a loop has $2$ arcs coming out of the vertex, and another edge goes to another vertex.  Such a graph is called a \emph{trivalent} or \emph{cubic} graph.      
\begin{figure}
\begin{center}

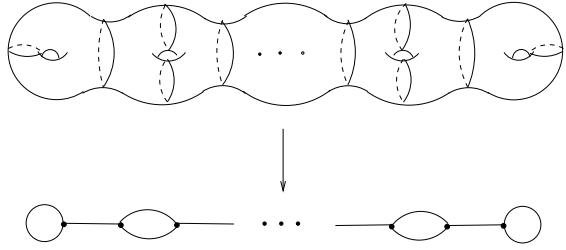

\caption{Pants Decomposition and Pants Decomposition Graph}  \label{base}
\end{center}
\end{figure}

There is a relationship between pants decomposition graphs that is similar to elementary moves between pants decompositions.  Let there be an elementary move between $\pa = \{c_{1},\ldots,c_{i-1},c_{i},c_{i+1},\ldots,c_{3g-3g}\}$ and $\pb = \{c_{1},\ldots,c_{i-1},c^{'}_{i},c_{i+1},\ldots,c_{3g-3}\}$, and $\phi(\pa) \aand \phi(\pb)$ their corresponding pants decomposition graphs.  If $i(c_{i},c^{'}_{i})=1$, then $c_{i} \aand c^{'}_{i}$ are curves in the same one--holed torus, so they both occur as a loop attached to the same vertex while the rest of $\phi(\pa) \aand \phi(\pb)$ remain unchanged and identical to each other.  Therefore, $\phi(\pa) \simeq \phi(\pb)$.  Using Putman's method~\cite{P} for changing $\phi(\pa)$ to $\phi(\pb)$ when $i(c_{i},c^{'}_{i})=2$, the following is done.  In $\phi(\pa)$, the edge corresponding to $c_{i}$ will be adjacent to two vertices, $v_{1} \aand v_{2}$. Collapse this edge so that $v_{1} \aand v_{2}$ become one vertex.  Four edges, $2$ edges from each of the vertices $v_{1} \aand v_{2}$, will now be adjacent to this resulting vertex.  Now expand this vertex to $2$ new vertices, with one edge between these $2$ new vertices, and each new vertex will have one edge from $v_{1}$ and another edge from $v_{2}$.  This transformation of $\phi(\pa)$ is called an \emph{elementary shift} \label{estext}.  It can be seen in Figure~\ref{es}.  Every elementary shift of $\phi(\pa)$ is the result of an elementary move of $\pa$.
\begin{figure}
\begin{center}

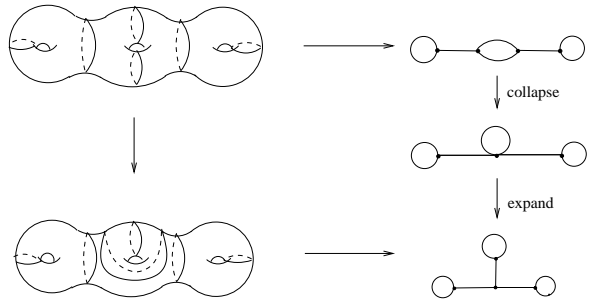

\caption{Elementary Shifts of Pants Decomposition Graphs}  \label{es}
\end{center}
\end{figure}

A homeomorphism of a surface preserves intersection numbers, so a homeomorphism of $\Sg$ takes a pants decomposition to another pants decomposition.  Consequently, $\Modgsigma$, the mapping class group of $\Sg$, operates on the pants graph, $\PSg$.  It has been shown that for any $2$ pants decompositions $\pa \aand \pb, \bb \phi(\pa)$ and $\phi(\pb)$ are isomorphic if and only if there is an $f \in \Modgsigma$ such that $f \cdot \pa = \pb$~\cite {P}~\cite{UW}.  Therefore, as the mapping class group acts on $\PSg$, each orbit of vertices has a unique trivalent graph on $2g - 2$ vertices that is the pants decomposition graph, modulo isomorphism, of every pants decomposition in the orbit.  From~\cite{UW} we see that the orbit of an edge in $\PSg$ between $\pa \aand \pb$, under the action of the mapping class group, consists of all edges in $\PSg$ that connect a pants decomposition in the orbit of $\pa$ to one in the orbit of $\pb$.  Thus the orbit of the edge corresponds to an elementary shift between $\phi(\pa)$ and $\phi(\pb)$.  This allows us to define a new graph, the \emph{orbit graph}, $\OSg$, where vertices are isomorphism classes of pants decomposition graphs and $2$ vertices are joined by an edge if there is an elementary shift between the $2$ corresponding pants decomposition graphs.  We can now refer to a specific pants decomposition graph by the same term as the orbit corresponding to it under the action.  Path and distance are defined as in all the other graphs discussed so far.  In Figure~\ref{og} we see the pants decomposition graphs for $10$ of the orbits for a genus $5$ surface and a part of the orbit graph where the $10$ vertices are these orbits.  Thus we see that the orbit graph is a graph of graphs.     
\begin{figure}
\begin{center}

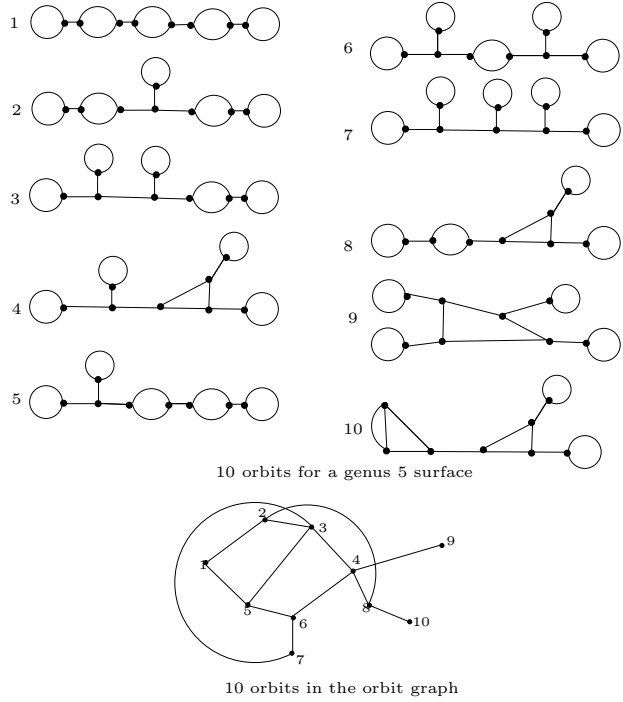

\caption{Orbits and the Orbit Graph}  \label{og}
\end{center}
\end{figure}

The pants graph, $\PSg$, is connected~\cite{HT}.  Therefore the orbit graph is connected, so there is a path connecting any $2$ vertices in the orbit graph.  Thus any pants decomposition graph can be transformed into any other one by a series of elementary shifts.  Let $\pp$ be a pants decomposition and $\ppo$ be it's orbit.  Then, if $\Rmo$ is another orbit that is one elementary shift away from $\ppo$, there will be some $\Rm \in \Rmo$ that is one elementary move away from $\pp$.  Let $\pp \aand \Qm$ be $2$  pants decompositions, with $\ppo \aand \Qmo$ their orbits.  Since there is a path from $\ppo$ to $\Qmo$ in the orbit graph, there will be a path in the pants graph from $\pp$ to some pants decomposition $\QQm \in \Qmo$, where each vertex in the pants graph path is a pants decomposition whose pants decomposition graph is an orbit corresponding to a vertex in the orbit path.  Consequently, if we can determine distance in the orbit graph, it would remain to find distance between $2$ pants decompositions in the same orbit.
\section{Distance in the Orbit Graph} \label{distorb}
The \emph{girth} of a graph is the length of the shortest cycle in the graph.  This section shows how to transform a pants decomposition graph into another one with a loop, based on the girth of the graph.  The calculation of an upper bound on girth enables us to find an upper bound on the number of elementary shifts needed to accomplish this transformation.  Using this information we then show how to find the distance between $2$ pants decomposition graphs. Graphs where all vertices have the same valence, like the trivalent pants decomposition graph for closed surfaces, has been the object of great interest to graph theorists.  As early as 1967 Tutte~\cite{T} discussed these graphs and introduced the term $\kGcage$. A $\kGcage$ is a graph with girth $G$ where all the vertices have valence $k$. The object of much of this study has been to see when such cages exist and how many vertices they have.  Tutte~\cite{T} showed there is a graph whenever $k=3$, and also calculated the number of vertices for some specific instances of girth.  However, for $G \geq 3 $, there is the following general lower bound, $LB$, on vertex number for all cages ~\cite {T,BB,W}:
\bdm
LB = \frac{2(k-1)^{G/2} - 2}{k-2} \qquad \textrm{for } G \textrm{ even}
\edm
and
\bdm
LB = \frac{k(k-1)^{\frac{G-1}{2}} - 2}{k-2} \qquad \textrm{for } G \textrm{ odd.}
\edm
Our pants decomposition graphs are $\cGcage$s, so for these graphs, where $G \geq 3$:
\bdm
LB = 2(2)^{G/2} - 2 \qquad \textrm{for } G \textrm{ even}
\edm
and
\bdm
LB = 3(2)^{\frac{G-1}{2}} - 2 \qquad \textrm{for } G \textrm{ odd}.
\edm
Since these graphs all have $2g - 2$ vertices, their girth will be such that $LB \leq 2g-2$.  For $G \geq 3$ and even, we must have $G \geq 4$, so $G/2 \geq 2$.  Then $2^{G/2} \geq 4 > 2$.  Thus $2(2)^{G/2} > (2)^{G/2} + 2$, so $LB > (2)^{G/2}$.  For $G$ odd,
\begin{eqnarray*}   
LB & = & 3(2)^{\frac{G-1}{2}} - 2 \\
    & = & \frac{3}{\sqrt{2}}(2)^{G/2} - 2\\
    & = & (\frac{3}{\sqrt{2}} -1)(2)^{G/2} - 2 + (2)^{G/2}.
\end{eqnarray*}
For $G \geq 3, \bb (2)^{G/2} \geq (2)^{3/2} = 2 \sqrt{2}$, so
\begin{eqnarray*}
(\frac{3}{\sqrt{2}} -1)(2)^{G/2} & \geq & (\frac{3}{\sqrt{2}} -1)2 \sqrt{2}\\
                                                  & =  &  6 - 2 \sqrt{2}\\ 
                                                  & > & 2.
\end{eqnarray*}
Therefore, $(\frac{3}{\sqrt{2}} -1)(2)^{G/2} - 2 > 0$, so
\bdm
LB = (\frac{3}{\sqrt{2}} -1)(2)^{G/2} - 2 + (2)^{G/2} > (2)^{G/2}.
\edm
Consequently, for any pants decomposition graph with girth $G \geq 3, \bb (2)^{G/2} < 2g-2$.  Taking logs, we find $G/2 < \log{(2g-2)}$.  Then,
\begin{eqnarray*} 
G  & < & 2\log{2(g-1)}\\
    & = & 2(1+ \log{(g-1)}).
\end{eqnarray*}
There are pants decomposition graphs that have girth of $2$, such as the genus $2$ orbit that does not have a loop.  See Figure~\ref{ls}.  Since we are interested only in $g \geq 2, \bb  2(1+ \log{(g-1)}) \geq 2$, so $G \leq 2(1+ \log{(g-1)})$ for all $G \geq 2 $.  Applying the ``collapse and expand" process associated with an elementary shift ( as seen in Figure~\ref{es} ) to an edge of a cycle reduces the length of the cycle by $1$.  So it would take one elementary shift less than the length of a cycle to reduce the cycle to a loop, and then $G-1$ elementary shifts would transform one pants decomposition graph, $\ppo$, to another one, $\Qmo$, which has a loop.  Thus, the upper bound on the distance between two such graphs, $\ppo \aand \Qmo$, in the orbit graph, would be $1+2 \log{(g-1)}$.  Putman introduces a technique~\cite{P} which we call \emph{loop surgery}.  We can perform loop surgery on $\Qmo$ by cutting off the loop, the vertex the loop is attached to, and the edge adjacent to that vertex.  The remaining vertex from that edge is then removed, leaving the other two edges that were attached to this last vertex unconnected to anything.  Now join these two edges to make one smooth new edge in a new trivalent graph which has $2$ less vertices.  See Figure~\ref{ls}.  
\begin{figure}
\begin{center}

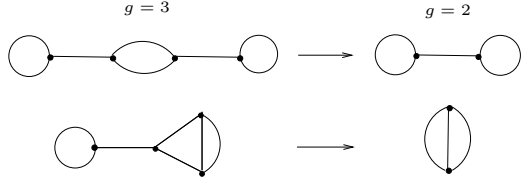

\caption{Loop Surgery}  \label{ls}
\end{center}
\end{figure}
The result is a pants decomposition graph, $\Qpo$, for a genus $g-1$ surface. Let $\mathbf{Y}$ denote the loop, the vertex the loop is attached to and the edge adjacent to that vertex.  
\begin{prop} \label{looppull}
For each pants decomposition graph, $\ppo$, there is another pants decomposition graph, $\Qtil$, with $g$ loops, where
\bdm
\DOgPQtil    \leq 2g - 3 + 2\sum_{2 \leq i \leq g}\log{(i-1)}.
\edm
\end{prop}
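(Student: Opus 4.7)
The plan is to proceed by induction on $g$. The base case $g = 2$ is a direct check: the only trivalent graphs on two vertices are the theta graph (girth $2$, no loops) and the dumbbell (two loops, i.e., $g$ loops), a single elementary shift connects them, and the claimed bound is $2(2) - 3 + 2\log(1!) = 1$.

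For the inductive step, given $\ppo$ in $\OSg$, I would first apply the girth bound derived just above the proposition: $\ppo$ contains a cycle of length $G \leq 2(1 + \log(g-1))$, and performing the ``collapse-and-expand'' elementary shift on successive edges of this cycle (each of which reduces its length by one) yields a pants decomposition graph $\Qmo$ containing a loop after at most $G - 1 \leq 1 + 2\log(g-1)$ shifts. Next I would apply loop surgery to $\Qmo$ to produce a genus-$(g-1)$ pants decomposition graph $\Qpo$, and invoke the inductive hypothesis to obtain a graph $\QQpo$ with $g - 1$ loops whose distance from $\Qpo$ in the genus-$(g-1)$ orbit graph is at most $2(g-1) - 3 + 2\sum_{2 \leq i \leq g-1}\log(i-1)$. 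Finally I would lift this genus-$(g-1)$ path to a path of the same length in $\OSg$ starting at $\Qmo$, obtained by reattaching the component $\mathbf{Y}$ that was excised during the surgery; its endpoint $\Qtil$ carries $g - 1$ loops inherited from $\QQpo$ together with the loop preserved from $\Qmo$, giving $g$ loops in total.

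Summing the two stages yields
\[
\DOgPQtil \leq \bigl(1 + 2\log(g-1)\bigr) + \bigl(2g - 5 + 2\sum_{2 \leq i \leq g-1}\log(i-1)\bigr) = 2g - 4 + 2\sum_{2 \leq i \leq g}\log(i-1),
\]
which is within the claimed bound $2g - 3 + 2\sum_{2 \leq i \leq g}\log(i-1)$.

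The step I expect to be the main obstacle is the lifting: I need to verify that every elementary shift of $\Qpo$ in $\OSf$ corresponds to an elementary shift of the lift in $\OSg$ that preserves $\mathbf{Y}$. This rests on the fact, implicit in Putman's loop-surgery setup, that the pants decompositions of $\Sg$ containing the non-separating curve giving rise to $\mathbf{Y}$ form a subcomplex of $\PSg$ canonically isomorphic to $\mathcal{P}(\Sigma_{g-1})$. The subtle case is when a shift in $\Qpo$ acts on the ``joined'' edge produced by the surgery; one must verify that the corresponding shift in $\OSg$ takes place near $\mathbf{Y}$ without disturbing it.
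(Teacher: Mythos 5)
Your strategy coincides with the paper's: girth bound, collapse-and-expand along a shortest cycle to create a loop, loop surgery, induction on genus, and lifting the genus-$(g-1)$ path back to $\OSg$ by carrying $\mathbf{Y}$ along. But there is one genuine gap, and it sits exactly where your arithmetic shows one unit of slack. You assert that the endpoint of the lifted path carries the $g-1$ loops of $\QQpo$ together with the loop preserved inside $\mathbf{Y}$, hence $g$ loops. That is not always true. The piece $\mathbf{Y}$ is reattached by a vertex lying on the edge of $\Qpo$ that was formed by joining the two loose ends left by the surgery, and that edge is carried through the sequence of elementary shifts. If, at the end of the transformation, this edge has become one of the loop edges of $\QQpo$, then the attaching vertex of $\mathbf{Y}$ subdivides that loop into a cycle of length $2$: the lift then has only $g-1$ loops ($g-2$ surviving from $\QQpo$ plus the one in $\mathbf{Y}$) together with a $2$--cycle. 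One further elementary shift is required to convert that $2$--cycle into a loop, and this extra shift is precisely what turns your total $2g-4+2\sum_{2\leq i\leq g}\log{(i-1)}$ into the proposition's $2g-3+2\sum_{2\leq i\leq g}\log{(i-1)}$. (The same phenomenon already appears in the paper's base discussion for $g=3$, where $\mathbf{Y}$ may reattach to one of the two loops of the genus $2$ graph.)

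The obstacle you do flag --- whether a shift performed on the joined edge lifts to $\OSg$ at all --- is the easier half of the issue. The lift always exists; the cost is only that the attaching vertex of $\mathbf{Y}$ may lengthen a cycle of the transformed graph by one vertex, and your proof needs to account for the single additional shift that this forces at the end.
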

\begin{proof}
If $g = 2$, it takes at most $1$ elementary shift to transform $\ppo$ to a graph with $2$ loops, so the proposition is true for $g=2$.\
Using induction, assume true for $g-1$.
Let $g \geq 3$.  Transform $\ppo$ to $\Qmo$ and perform loop surgery on $\Qmo$ to get $\Qpo$ as described above.  We now find a girth length cycle in $\Qpo$ whose length will be less than or equal to $2 + 2\log{(g-2)}$.  Then $\Qpo$ can be transformed to a graph, $\QQpo$, with a loop, by doing at most $1 + 2\log{(g-2)}$ elementary shifts along all but one of the edges of the cycle.  This corresponds to a path in $\OSf$ of length $1 + 2\log{(g-2)}$.  We now lift this path to $\OSg$ by reattaching $\mathbf{Y}$ to $\Qpo$ to get $\Qmo$ and carrying $\mathbf{Y}$ along with the transformations.  If $g=3$, $\QQpo$ will be the genus $2$ graph with $2$ loops, so it is possible that in it's lift $\mathbf{Y}$ will be attached to one of these loops.  In this case, one more elementary shift is required to move $\mathbf{Y}$ to the edge between these $2$ loops.  Thus it is true for $g=3$.  Now let $g \geq 4$.  If none of the edges of the cycle in $\Qpo$ are the result of joining the two loose edges of $\ppo$ after the removal of  $\mathbf{Y}$, then the end result of the lift is that $1 + 2\log{(g-2)}$ elementary shifts will give us a graph in $\OSg$ that is the lift of $\QQpo$ and it has one more loop than $\Qmo$ has.  Otherwise, another vertex gets added to the cycle in $\Qpo$ at the point where $\mathbf{Y}$ is attached, so one more elementary shift must be done to get this new loop.  So we see that it takes at most $ 1 + 2\log{(g-1)} + 1 + 2\log{(g-2)} + 1$ elementary shifts to get a graph with $2$ more loops than $\ppo$ has.  The logic is that we can continue the process recursively by pulling off one new loop for each suceeding lower genus, going all the way down to $g=2$.  This is formalized by an induction argument.  By induction we know that it takes at most $2(g-1) - 3 + 2\sum_{2 \leq i \leq g-1}\log{(i-1)}$ elementary shifts to convert $\Qpo$ to a graph with $g-1$ loops.  The path of this transformation can be lifted to $\OSg$, starting at $\Qmo$.  It takes at most $1 + 2\log{(g-1)}$ elementary shifts to convert $\ppo$ to $\Qmo$.  Finally, if in the lift $\mathbf{Y}$ is attached to the transformation of $\Qpo$ along a loop edge, the attaching vertex of $\mathbf{Y}$ adds a vertex to the loop, creating a cycle of length $2$ and keeping the number of loops in the lift of $\QQpo$ at $g-1$.  One more elementary shift is needed to convert the cycle to another loop, giving us a graph with $g$ loops.  Denote this graph by $\Qtil$.  See Figure~\ref{anyloops}.  Then
\bqr
\DOgPQtil & \leq & 1 + 2\log{(g-1)} + 2(g-1) -3 + 2\sum_{2 \leq i \leq g-1}\log{(i-1)} +1\\
               & \leq &  2g - 3 + 2\sum_{2 \leq i \leq g}\log{(i-1)}. 
\eqr
So it is true for all $g \geq 2$.  
\end{proof}
\begin{figure}
\begin{center}

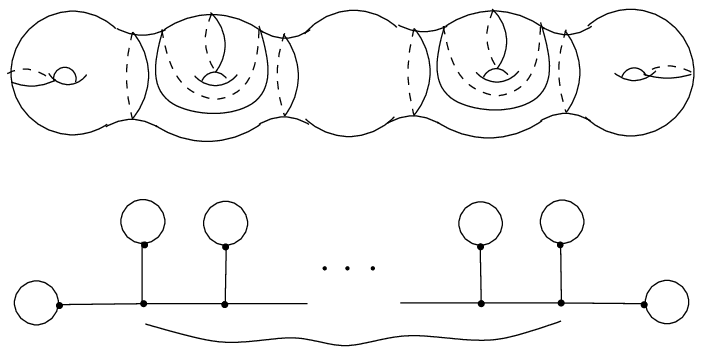

\caption{Special Pants Decomposition and Pants Decomposition Graph With $g$ Loops}\label{loops}
\end{center}
\end{figure}

We now want to find the distance from $\Qtil$ to $\Oloops$, a special trivalent graph on $2g - 2$ vertices with $g$ loops, as in Figure~\ref{loops}.  We do this because we know how to calculate distance between pants decompositions in the orbit $\Oloops$.  Our graphs are isomorphism classes of graphs, so when $2 \leq g \leq 5$ we have $\Qtil$ isomorphic to $\Oloops$, resulting in $\DOgQtilploops = 0$.  For $g \geq 6$, a graph like $\Qtil$ would look like the one in Figure~\ref{anyloops}.  It has $n$ branches, with $l_{i} + 1$ loops on the $i$th branch for $1 \leq i \leq n$.  An elementary shift according to the ``collapse and expand" process on the edge from vertex $v_{i}$ to vertex $v_{i,1}$ transforms the graph by moving the loop at the vertex $v_{i,1}$ to the edge between $v_{i-1}$ and a new vertex at the base of the $i$th branch and shortening the $i$th branch by $1$ loop.  So it would take $l_{i}$ elementary shifts to flatten the entire branch.  Therefore it would take $\sum_{1 \leq i \leq n} l_{i}$ elementary shifts to convert $\Qtil$ to $\Oloops$.  However, there are $l_{i} + 1$ loops on each branch vertex $v_{1}$ to $v_{n}$, so $g = \textrm{number of loops} = n + 2 + \sum_{1 \leq i \leq n} l_{i}$.  If $n$ is equal to either $1$ or $2$, $\Qtil$ is isomorphic to $\Oloops$, so $\DOgQtilploops = 0$.  Then if $n \geq 3$,   $\sum_{1 \leq i \leq n} l_{i} \leq g - 5$.  Thus
\beqn \label{dloops}
\DOgQtilploops \leq \max{(0,g -5)}
\eeqn
\begin{figure}
\begin{center}

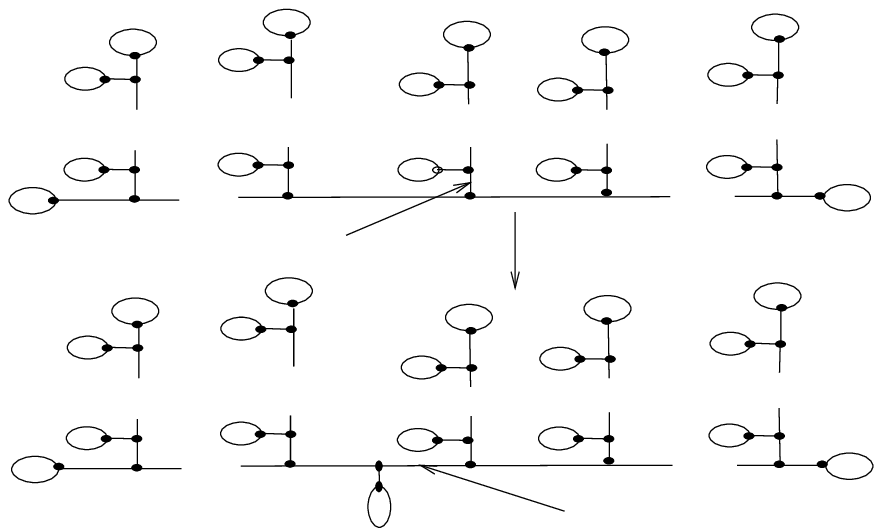

\caption{Move the Loop on $\Qtil$}  \label{anyloops}
\end{center}
\end{figure}
\begin{proof}[Proof of Theorem~\ref{orbdiam}] 
Let $\pao \aand \pbo$ denote the respective pants decomposition graphs for $\pa \aand \pb$ and $\Qatil \aand \Qbtil$ the respective transformations of $\pao \aand \pbo$ to graphs with $g$ loops.  Then
\begin{eqnarray}
\DOgPaPb & \leq & \DOgPaQatil + \DOgQatilploops + \nonumber \\
               &        &      \BBb  \DOgPbQbtil + \DOgQbtilploops \nonumber\\
               &  \leq  &  2(2g - 3 + 2\sum_{2 \leq i \leq g}\log{(i-1)} + \max{(0,g -5)}) \nonumber \\
               &  \leq  & \left\{\begin{array}{ll} 
                  4g - 6 +4\sum_{2 \leq i \leq g}\log{(i-1)}  & \mbox{$2 \leq g \leq 5$}\\
                  6g - 16 +4\sum_{2 \leq i \leq g}\log{(i-1)} & \mbox{$6 \leq g$}
                 \end{array}
                \right. 
\end{eqnarray}
The \emph{diameter} of the orbit graph for $\Sg$ is the upper bound on distance between any two vertices in this graph.  We know this exists because there are only a finite number of distinct trivalent graphs that have a given even number of vertices.  Since $\pa \aand \pb$ are arbitrary, 
\begin{equation} \nonumber
   \DOgsigma  \leq \left\{\begin{array}{ll} 
                  4\log{(g-1)!} + 4g - 6  & \mbox{$2 \leq g \leq 5$}\\
                  4\log{(g-1)!} + 6g - 16 & \mbox{$6 \leq g$}
                 \end{array}
                \right.
\end{equation}
\end{proof}
The process of finding an upper bound on the distance between $\pa \aand \pb$ involves constructing a pair of paths in the orbit graph from $\pao \aand \pbo$ to $\Oloops$ and calculating an upper bound on their length.  The paths will have corresponding paths of the same length in the pants graph from $\pa$(resp. $\pb$) to some $\Ra$(resp. $\Rbb$) in $\Oloops$.  Since 
\bdm 
\DgPaPb  \leq  \DgPaRa +\DgPbRbb +\DgRaRbb,
\edm
we have
\begin{equation} \label{diamglp}
   \DgPaPb  \leq \left\{\begin{array}{ll} 
                  4\log{(g-1)!} + 4g - 6 +\DgRaRbb & \mbox{$2 \leq g \leq 5$}\\
                  4\log{(g-1)!} + 6g - 16 +\DgRaRbb & \mbox{$6 \leq g$}
                 \end{array}
                \right.
\end{equation}
\section{Distance Within The Orbit $\Oloops$} \label{basedist}
Let $\{c_{1}, \ldots, c_{3g-1}\}$ be the standard Lickorish generating curves, so $\{T_{c_{1}},\ldots,T_{c_{3g-1}} \}$ will be the standard Lickorish generators.  See Figure~\ref{lick}.  There is a path in the pants graph from $\Ols$ to both $T_{c_{i}}(\Ols)$ and  $T^{-1}_{c_{i}}(\Ols)$ of length $l_{c_{i}}$ where:
\beqn \label{lc}
   l_{c_{i}} = \left\{\begin{array}{ll} 
                  1 & \mbox{$1 \leq i \leq g$}\\
                  4 & \mbox{$i=g+1,2g-1$}\\
                  6 & \mbox{$g+2 \leq i \leq 2g-2$}\\
                  0 & \mbox{$2g \leq i \leq 3g-1$}
                 \end{array}
                \right.
\eeqn
\begin{figure}

\begin{center}

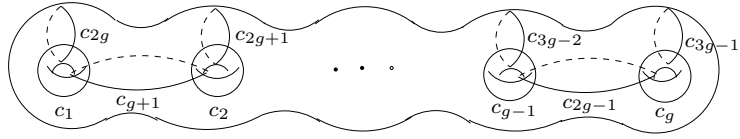

\caption{Standard Lickorish Generating Curves} \label{lick}
\end{center}
\end{figure}
\begin{figure}
\begin{center}

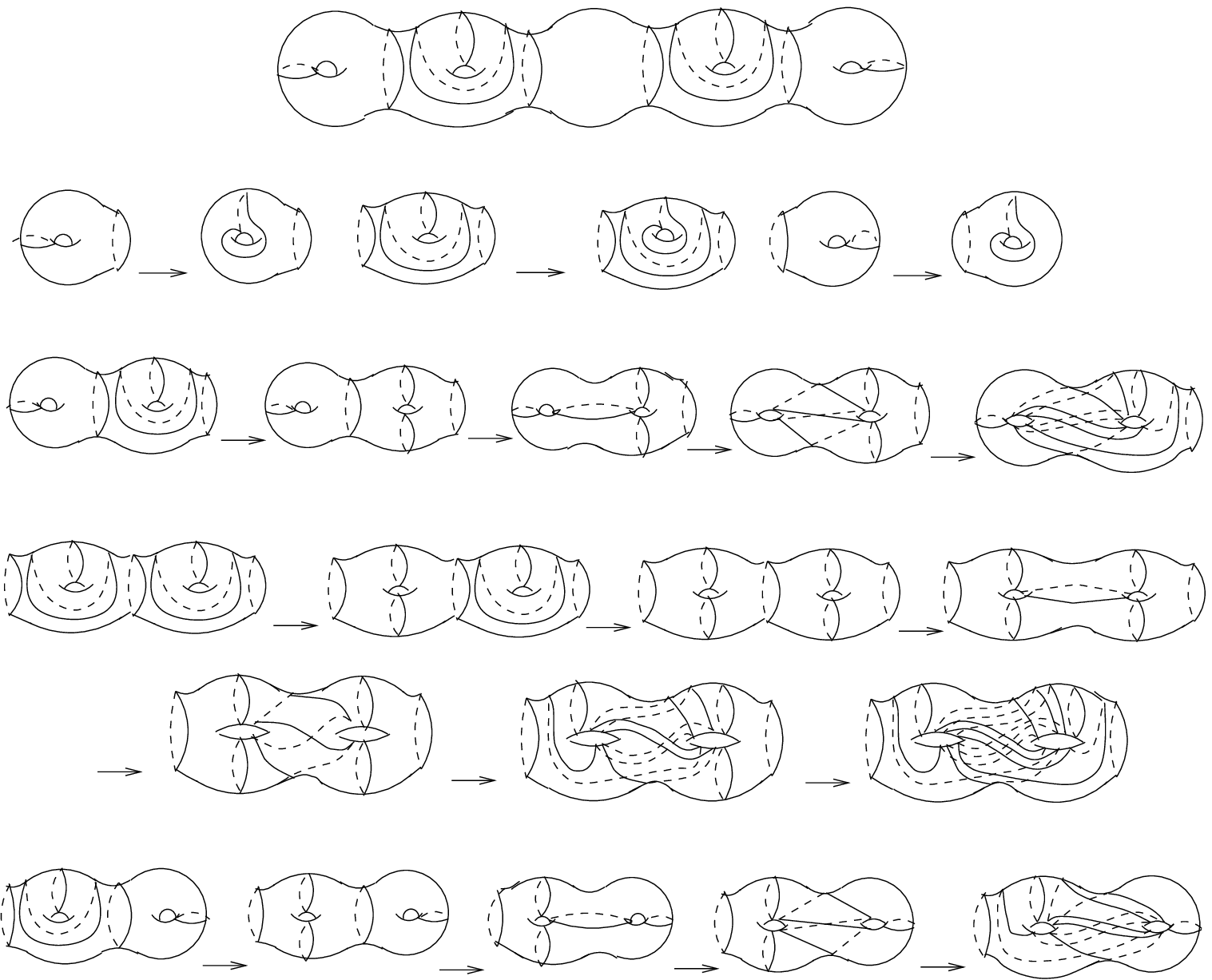

\caption{Paths From $\Ols$}  \label{loopspath}
\end{center}
\end{figure}
These paths can be seen in Figure~\ref{loopspath}, where only the piece of $\Ols$ that changes in the pants decompositions along the path from $\Ols$ to $T_{c_{i}}(\Ols)$ is shown.  Putman has constructed paths from another pants decomposition that is in a different orbit.  They obviously differ from ours, though they inspired this construction~\cite{P}. We now use our path lengths as follows.
\begin{lemma} \label{lem1}
Let $w \in \Modgsigma$, so there are some standard Lickorish generating curves $\{c_{i_{1}}, \ldots, c_{i_{k}} \}$ such that $w=T^{e_{i_{1}}}_{c_{i_{1}}} \cdots T^{e_{i_{k}}}_{c_{i_{k}}}$ with $e_{i_{j}}=\pm 1 $ for $1 \leq j \leq k$.  Then there is a path from $\Ols$ to $w(\Ols)$ of length equal to $\sum_{j=1}^{k}l_{c_{i_{j}}}$, where $l_{c_{i_{j}}}$ is the length of a path from $\Ols$ to both $T_{c_{i_{j}}}(\Ols)$ and  $T^{-1}_{c_{i_{j}}}(\Ols)$.
\end{lemma}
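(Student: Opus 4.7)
The natural approach is induction on the word length $k$, leveraging the fact that the mapping class group acts on $\PSg$ by isometries.

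First I would record the key observation that $\Modgsigma$ acts on the pants graph by graph automorphisms: any $f \in \Modgsigma$ sends pants decompositions to pants decompositions, and since $f$ preserves geometric intersection numbers, $f$ takes elementary moves to elementary moves. Hence $f$ induces a simplicial automorphism of $\PSg$, and in particular is an isometry with respect to the path metric. This means that for any $\pp, \Qm \in \PSg$ and any $f \in \Modgsigma$, a path from $\pp$ to $\Qm$ of length $\ell$ yields a path from $f(\pp)$ to $f(\Qm)$ of length $\ell$, by applying $f$ vertex-by-vertex.

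Next I would induct on $k$. For $k=1$, the statement is immediate from the construction preceding the lemma: equation \eqref{lc} together with Figure~\ref{loopspath} exhibits, for each standard Lickorish generating curve $c_i$, an explicit path in $\PSg$ of length $l_{c_i}$ from $\Ols$ to $T_{c_i}(\Ols)$, and symmetrically of length $l_{c_i}$ from $\Ols$ to $T^{-1}_{c_i}(\Ols)$. For the inductive step, write $w = u \cdot T^{e_{i_k}}_{c_{i_k}}$ where $u = T^{e_{i_1}}_{c_{i_1}} \cdots T^{e_{i_{k-1}}}_{c_{i_{k-1}}}$. The inductive hypothesis produces a path from $\Ols$ to $u(\Ols)$ of length $\sum_{j=1}^{k-1} l_{c_{i_j}}$. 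The base case supplies a path from $\Ols$ to $T^{e_{i_k}}_{c_{i_k}}(\Ols)$ of length $l_{c_{i_k}}$, and applying the isometry $u$ to this path produces a path from $u(\Ols)$ to $u\bigl(T^{e_{i_k}}_{c_{i_k}}(\Ols)\bigr) = w(\Ols)$ of length $l_{c_{i_k}}$. Concatenating gives a path from $\Ols$ to $w(\Ols)$ of total length $\sum_{j=1}^{k-1} l_{c_{i_j}} + l_{c_{i_k}} = \sum_{j=1}^{k} l_{c_{i_j}}$, as claimed.

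There is no serious obstacle here; the content of the lemma is essentially that the Lickorish generators combine additively in terms of path length because their individual paths from $\Ols$ can be translated by earlier mapping classes. The only care needed is in the composition convention: since twists act on the left and we peel off the rightmost factor, the prefix $u$ is exactly what translates the last twist's base path into position, ensuring that the concatenation endpoint is $w(\Ols)$ rather than some conjugate.
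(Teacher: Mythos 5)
Your proof is correct and follows essentially the same route as the paper's: both arguments translate the explicit length-$l_{c_{i_j}}$ path for a single twist by the prefix word $T^{e_{i_1}}_{c_{i_1}}\cdots T^{e_{i_{j}}}_{c_{i_{j}}}$ (using that $\Modgsigma$ preserves intersection numbers and hence acts by isometries on $\PSg$) and concatenate the resulting segments. Your version merely packages the concatenation as an induction on $k$, which is a cosmetic rather than substantive difference.
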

\begin{proof}
Consider the path from $\Ols$ to $T^{e_{i_{j}}}_{c_{i_{j}}}(\Ols)$.  Let $f \in \Modgsigma$.  Apply $f$ to every pants decomposition in the path.  Since intersection numbers are preserved, we get a path from $f(\Ols)$ to $f (T^{e_{i_{j}}}_{c_{i_{j}}}(\Ols))=fT^{e_{i_{j}}}_{c_{i_{j}}}(\Ols)$ that is the same length, $l_{c_{i_{j}}}$, as the path from $\Ols$ to $T^{e_{i_{j}}}_{c_{i_{j}}}(\Ols)$.  Consequently, there will be a path from $T^{e_{i_{1}}}_{c_{i_{1}}} \cdots T^{e_{i_{j}}}_{c_{i_{j}}}(\Ols)$ to $T^{e_{i_{1}}}_{c_{i_{1}}} \cdots T^{e_{i_{j}}}_{c_{i_{j}}}T^{e_{i_{j+1}}}_{c_{i_{j+1}}}(\Ols)$ that will be the same length as a path from $\Ols$ to $T^{e_{i_{j+1}}}_{c_{i_{j+1}}}(\Ols)$.  Thus we have a path 
\bdm
\Ols \rightarrow T^{e_{i_{1}}}_{c_{i_{1}}}(\Ols) \rightarrow T^{e_{i_{1}}}_{c_{i_{1}}}(T^{e_{i_{2}}}_{c_{i_{2}}}(\Ols)) \rightarrow \ldots \rightarrow T^{e_{i_{1}}}_{c_{i_{1}}} \cdots T^{e_{i_{k-1}}}_{c_{i_{k-1}}}(T^{e_{i_{k}}}_{c_{i_{k}}}(\Ols))
\edm
whose length is $\sum_{j=1}^{k}l_{c_{i_{j}}}$.  Since $w=T^{e_{i_{1}}}_{c_{i_{1}}} \cdots T^{e_{i_{k-1}}}_{c_{i_{k-1}}}T^{e_{i_{k}}}_{c_{i_{k}}}$,  we have a path from $\Ols$ to $w(\Ols)$ whose length is $\sum_{j=1}^{k}l_{c_{i_{j}}}$. 
\end{proof}
\begin{lemma} \label{lem2}
Let $\{T_{c_{1}},\ldots,T_{c_{3g-1}} \}$ be the standard Lickorish generators.  Then for any $f \in \Modgsigma$, $\{T_{f(c_{1})},\ldots,T_{f(c_{3g-1})}\}$ is another set of Lickorish generators for the mapping class group.
\end{lemma}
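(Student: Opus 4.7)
The plan is to reduce everything to the standard conjugation identity for Dehn twists, namely
\[
f\, T_{c}\, f^{-1} \;=\; T_{f(c)} \qquad \text{for all } f \in \Modgsigma \text{ and simple closed curves } c.
\]
This is a classical fact (see, e.g., any standard reference on mapping class groups) and I will cite it rather than reprove it. Once this identity is in hand, the lemma is essentially immediate from the fact that $\{T_{c_{1}}, \ldots, T_{c_{3g-1}}\}$ already generates $\Modgsigma$ by Lickorish's theorem.

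The key step is then the following. Given an arbitrary $\psi \in \Modgsigma$, I want to express $\psi$ as a word in $\{T_{f(c_{1})}, \ldots, T_{f(c_{3g-1})}\}$. To do so, first form the conjugate $f^{-1}\psi f \in \Modgsigma$. By Lickorish's theorem applied to the original generating set, there exist indices $i_{1}, \ldots, i_{k}$ and exponents $e_{1}, \ldots, e_{k} \in \{\pm 1\}$ such that
\[
f^{-1}\psi f \;=\; T_{c_{i_{1}}}^{e_{1}} \cdots T_{c_{i_{k}}}^{e_{k}}.
\]
Conjugating back by $f$, and inserting $f^{-1}f = \mathrm{id}$ between consecutive twist factors, yields
\[
\psi \;=\; f\bigl(T_{c_{i_{1}}}^{e_{1}} \cdots T_{c_{i_{k}}}^{e_{k}}\bigr)f^{-1} \;=\; \bigl(f T_{c_{i_{1}}}^{e_{1}}f^{-1}\bigr) \cdots \bigl(f T_{c_{i_{k}}}^{e_{k}}f^{-1}\bigr) \;=\; T_{f(c_{i_{1}})}^{e_{1}} \cdots T_{f(c_{i_{k}})}^{e_{k}},
\]
where the last equality uses the conjugation identity for each factor. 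This exhibits $\psi$ as a word in $\{T_{f(c_{1})}^{\pm 1}, \ldots, T_{f(c_{3g-1})}^{\pm 1}\}$, completing the proof.

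There is no substantial obstacle here: the only nontrivial input is the conjugation identity $fT_{c}f^{-1}=T_{f(c)}$, which is standard. Everything else is a routine manipulation of words in $\Modgsigma$, and the argument makes no use of the specific geometry of the curves $c_{i}$ or of the surface $\Sg$ beyond the fact that the $T_{c_i}$ generate the mapping class group.
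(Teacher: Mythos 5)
Your proof is correct and follows essentially the same route as the paper: both arguments conjugate an arbitrary element by $f$, expand the conjugate in the standard Lickorish generators, and then use the identity $fT_{c}f^{-1}=T_{f(c)}$ to telescope the word into one in the twists about the curves $f(c_{i})$. No gaps.
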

\begin{proof}
Let $h \in \Modgsigma$.  Then $f^{-1}hf \in \Modgsigma$.  Therefore there are some standard Lickorish generating curves $\{c_{i_{1}},\ldots, c_{i_{k}} \}$ such that $f^{-1}hf=T^{e_{i_{1}}}_{c_{i_{1}}} \cdots T^{e_{i_{k}}}_{c_{i_{k}}}$ with $e_{i_{j}}=\pm 1 $ for $1 \leq j \leq k$.  It is a basic fact about Dehn twists that $T_{f(a)} = fT_{a}f^{-1}$ for all $f \in \Modgsigma$~\cite{FM}, so $f^{-1}T^{e_{i_{j}}}_{f(c_{i_{j}})}f = T^{e_{i_{j}}}_{c_{i_{j}}}$ for $1 \leq j \leq k$.  Thus, 
\begin{eqnarray*}
f^{-1}hf  & = &  f^{-1}T^{e_{i_{1}}}_{f(c_{i_{1}})}ff^{-1}T^{e_{i_{2}}}_{f(c_{i_{2}})}f \cdots f^{-1}T^{e_{i_{k}}}_{f(c_{i_{k}})}f \\
              & = & f^{-1}T^{e_{i_{1}}}_{f(c_{i_{1}})}T^{e_{i_{2}}}_{f(c_{i_{2}})} \cdots T^{e_{i_{k}}}_{f(c_{i_{k}})}f.
\end{eqnarray*}
Hence, $h = T^{e_{i_{1}}}_{f(c_{i_{1}})}T^{e_{i_{2}}}_{f(c_{i_{2}})} \cdots T^{e_{i_{k}}}_{f(c_{i_{k}})}$, so $\{T_{f(c_{1})},\ldots,T_{f(c_{3g-1})}\}$ also generates $\Modgsigma$. 
\end{proof}
\begin{prop} \label{prop}
Let $\Ra, \Rbb \in \Oloops$ and $w,h \in \Modgsigma$ such that $h(\Ra)=\Ols$ and $w(\Ra)=\Rbb$.  For some subset of the standard Lickorish generators, $\{c_{i_{1}}, \ldots, c_{i_{k}} \}$, we have $w = T^{e_{i_{1}}}_{h^{-1}(c_{i_{1}})} \cdots T^{e_{i_{k}}}_{h^{-1}(c_{i_{k}})}$, with $e_{i_{j}}=\pm 1 $ for $1 \leq j \leq k$, where this is a minimum length word in these twists.  There is a path from $\Ra$ to $\Rbb$ in $\Oloops$ of length equal to $\sum_ {1 \leq i_{j} \leq g}|e_{i_{j}}| + 4\sum_ {i_{j}=g+1,\, 2g-1}|e_{i_{j}}| + 6\sum_ {g+2 \leq i_{j} \leq 2g-2}|e_{i_{j}}|$    
\end{prop}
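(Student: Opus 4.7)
The plan is to reduce the problem to Lemma~\ref{lem1} by conjugating the word for $w$ so that the relevant endpoint becomes $\Ols$. Lemma~\ref{lem2} immediately guarantees that $\{T_{h^{-1}(c_1)}, \ldots, T_{h^{-1}(c_{3g-1})}\}$ generates $\Modgsigma$, so an expression for $w$ of the stated form exists; choose one of minimum length. Applying the standard Dehn twist conjugation identity $T_{f(a)} = f T_a f^{-1}$ with $f = h^{-1}$ gives $T^{e_{i_j}}_{h^{-1}(c_{i_j})} = h^{-1} T^{e_{i_j}}_{c_{i_j}} h$ for each $j$. Multiplying these factors together, the consecutive $h \cdot h^{-1}$ pairs telescope, so the word for $w$ collapses to $w = h^{-1} w' h$, where $w' := T^{e_{i_1}}_{c_{i_1}} \cdots T^{e_{i_k}}_{c_{i_k}}$ is now a word in the standard Lickorish generators themselves.

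Lemma~\ref{lem1} then supplies a path in the pants graph from $\Ols$ to $w'(\Ols)$ of length $\sum_{j=1}^{k} l_{c_{i_j}}$. Transport this path by the homeomorphism $h^{-1}$, applied vertex-by-vertex. Because intersection numbers are preserved, each edge is carried to an elementary move, so the length is unchanged. The starting vertex becomes $h^{-1}(\Ols) = \Ra$ and the endpoint becomes $h^{-1}(w'(\Ols)) = h^{-1} w' h(\Ra) = w(\Ra) = \Rbb$, producing the desired path from $\Ra$ to $\Rbb$ of length $\sum_{j=1}^{k} l_{c_{i_j}}$.

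It remains only to rewrite this sum using the explicit values in (\ref{lc}). Since each $|e_{i_j}|=1$, weighting each term by $|e_{i_j}|$ costs nothing; indices with $2g \leq i_j \leq 3g-1$ contribute $l_{c_{i_j}}=0$ and so drop out; and the three remaining ranges collect into the three sums with coefficients $1$, $4$, and $6$ that appear in the statement. There is really no technical obstacle here: the substance is the conjugation step $w = h^{-1} w' h$ together with the observation that $\Modgsigma$ acts on $\PSg$ by isometries of the pants graph. The minimum-length hypothesis on the word is not actually used to build the path, but makes the resulting length as small as possible when feeding this bound into (\ref{diamglp}) for the proof of Theorem~\ref{goal}.
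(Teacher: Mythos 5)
Your proof is correct and follows essentially the same route as the paper's: express $w$ in the conjugated generators $T_{h^{-1}(c_{i})}$, use $T_{f(a)}=fT_{a}f^{-1}$ to transfer this to a word $w'$ in the standard Lickorish generators taking $\Ols=h(\mathbf{R_{1}})$ to $h(\mathbf{R_{2}})$, invoke Lemma~\ref{lem1} for the path and Equation~\ref{lc} for its length, and transport the path back by $h^{-1}$. Your parenthetical that minimality of the word is not needed to build the path (only to make the bound sharp) is also an accurate reading of how the hypothesis functions in the paper.
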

\begin{proof}
By Lemma~\ref{lem2}, $\{T_{h^{-1}(c_{1})},\ldots,T_{h^{-1}(c_{3g-1})}\}$ also generates $\Modgsigma$, so $w$ can be expressed in terms of these generators.  We can assume that $w$ is a minimal length word in these generators since Mosher~\cite{MM} has shown that the mapping class group of closed surfaces is automatic, ensuring that the word problem is solvable for it.  Since $h(\Ra), h(\Rbb) \in \Oloops$, there must be some $f \in \Modgsigma$ whereby $f(h(\Ra)) = h(\Rbb)$.  Then
\begin{eqnarray*}
f(h(\Ra)) & = & h(w(\Ra)) \\
           & = & h(T^{e_{i_{1}}}_{h^{-1}(c_{i_{1}})} \cdots T^{e_{i_{k}}}_{h^{-1}(c_{i_{k}})}(\Ra)) \\
           & = & T^{e_{i_{1}}}_{hh^{-1}(c_{i_{1}})} \cdots T^{e_{i_{k}}}_{hh^{-1}(c_{i_{k}})}(h(\Ra)) \\
           & = & T^{e_{i_{1}}}_{c_{i_{1}}} \cdots T^{e_{i_{k}}}_{c_{i_{k}})}(h(\Ra)),
\end{eqnarray*}
so  $T^{e_{i_{1}}}_{c_{i_{1}}} \cdots T^{e_{i_{k}}}_{c_{i_{k}})}$ is a word in the standard Lickorish generators that takes $\Ols$ to $h(\Rbb)$.  Lemma~\ref{lem1} tells us there is a path from $\Ols$ to $h(\Rbb)$ of length $\sum_{j=1}^{k}l_{c_{i_{j}}}$.  Substitute the values for $l_{c_{i_{j}}}$ in Equation~\ref{lc}, to get a path of length equal to $\sum_ {1 \leq i_{j} \leq g}|e_{i_{j}}| + 4\sum_ {i_{j}=g+1,\, 2g-1}|e_{i_{j}}| + 6\sum_ {g+2 \leq i_{j} \leq 2g-2}|e_{i_{j}}|$.  We now apply $h^{-1}$ to every pants decomposition in this path to get a path of the same length from $h^{-1}(\Ols) = h^{-1}(h(\Ra)) = \Ra$ to $h^{-1}(h(\Rbb))= \Rbb$. 
\end{proof}
\begin{proof}[Proof of Theorem~\ref{goal}]
From our work on orbits we know that corresponding to an orbit path from $\pao$(resp. $\pbo$) to $\Oloops$ will be a path of the same length in the pants graph from $\pa$(resp. $\pb$) to some $\Ra$(resp. $\Rbb$) in $\Oloops$. Let $w,h \in \Modgsigma$ where $w\Ra = \Rbb$ and $h\Ra = \Ols$.  By Proposition~\ref{prop}, there is some subset of the standard Lickorish generators, $\{c_{i_{1}}, \ldots, c_{i_{k}} \}$, such that $w = T^{e_{i_{1}}}_{h^{-1}(c_{i_{1}})} \cdots T^{e_{i_{k}}}_{h^{-1}(c_{i_{k}})}$, with $e_{i_{j}}=\pm 1 $for $1 \leq j \leq k$, where this is a minimum length word in these twists and there is a path from $\Ra$ to $\Rbb$ in $\Oloops$ of length equal to $\sum_ {1 \leq i_{j} \leq g}|e_{i_{j}}| + 4\sum_ {i_{j}=g+1,\, 2g-1}|e_{i_{j}}| + 6\sum_ {g+2 \leq i_{j} \leq 2g-2}|e_{i_{j}}|$.  This plus Equation~\ref{diamglp} gives us 
\begin{equation} \nonumber
   \DgPaPb  \leq \left\{\begin{array}{ll} 
                 4\log{(g-1)!} + 4g - 6 +\sum_{1 \leq i_{j} \leq g}|e_{i_{j}}| + \\
\bbb \bbb \bbb  4\sum_ {i_{j}=g+1,\, 2g-1}|e_{i_{j}}| + 6\sum_ {g+2 \leq i_{j} \leq 2g-2}|e_{i_{j}}| & \mbox{$2 \leq g \leq 5$}\\
               4\log{(g-1)!} + 6g - 16 +\sum_{1 \leq i_{j} \leq g}|e_{i_{j}}| + \\
\bbb \bbb \bbb  4\sum_ {i_{j}=g+1,\, 2g-1}|e_{i_{j}}| + 6\sum_ {g+2 \leq i_{j} \leq 2g-2}|e_{i_{j}}| & \mbox{$6 \leq g$}
                 \end{array}
                \right.
\end{equation}
\end{proof}
\section{Open Problems and Limitations} \label{op}
It would be interesting to investgate the structure of the orbit graph.  If we could document the length of a path between any two elements of any orbit, so as not to have to go through $\Oloops$, we might have a shorter path and thus a better upper bound.  We also don't know if the shortest word operating on one pants decomposition, taking it to another, will yield the shortest path between these two pants decompositions.  It would be nice to extend this work to surfaces with boundaries.
\section*{Acknowledgement}
I want to thank Joan Birman for her interest in and time devoted to discussions about this subject.\\
\\
moser@math.columbia.edu\\
74 Wensley Drive\\
Great Neck, N.Y. 11020
 
\flushleft
\bibliography{pantsdistancenew4_18bb}
\bibliographystyle{plain}
\end{document}